\newcommand{\prob}{\mathbb{P}}
\newcommand{\unifdist}{\textsf{Unif}}
\newcommand{\ind}{{\mathds{1}}}
\newcommand{\mean}{\mathbb{E}}
\newcommand{\sd}{\operatorname{SD}}
\newcommand{\median}{\operatorname{median}}
\newtheorem{lemma}{Lemma}
\newtheorem{fact}{Fact}
\newtheorem{theorem}{Theorem}
\begin{document}

\title{\ifdefined\exab (Extended abstract) \fi
Improving Monte Carlo randomized approximation schemes}

\author{Mark Huber \\
Claremont McKenna College \\ {\tt mhuber@cmc.edu} }

\maketitle

\begin{abstract}
Consider a central problem in randomized approximation
schemes that use a Monte Carlo approach.  Given a sequence
of independent, identically distributed random variables 
$X_1,X_2,\ldots$ with mean $\mu$ and standard deviation at most
$c \mu$, where $c$ is a known constant, and $\epsilon,\delta > 0$,
create an estimate
$\hat \mu$ for $\mu$ such that 
$\prob(|\hat \mu - \mu| > \epsilon \mu) \leq \delta$.  This
technique has been used for building randomized approximation schemes
for the volume of a convex body, the permanent of a nonnegative matrix,
the number of linear extensions of a poset, the partition function of
the Ising model and many other problems.  Existing methods use (to the leading
order) $19.35 (c/\epsilon)^2 \ln(\delta^{-1})$ samples.  This is the 
best possible number up to the constant factor, 
and it is an open question as to 
what is the best constant possible.  This work gives an easy to apply 
estimate that only uses $6.96 (c/\epsilon)^2 \ln(\delta^{-1})$ samples in the 
leading order.
\end{abstract}

\thispagestyle{empty}

\section{Introduction}

The most common form of randomized approximation algorithm works by
finding an 
$X$ whose mean $\mu$ matches the true answer given the input, next
drawing $X_1,\ldots,X_k$ independently and identically distributed (iid)
according to $X$, and finally creating an estimate $\hat \mu$ for
$\mu$ as a function of these random samples.

Applications of this technique include finding the 
partition function of a Gibbs distribution~\cite{hubertoappearc},
approximating the number of linear extensions of a 
poset~\cite{huber2010d}, estimating the volume of a convex
body~\cite{lovaszv2006}, approximating the permanent of a nonnegative
matrix~\cite{jerrumsv2004}, approximating the 
normalizing constant for the ferromagnetic Ising model~\cite{jerrums1993},
finding the maximum likelihood for 
spatial point process models~\cite{huber2009c}, and many others.


Say that such an estimate $\hat \mu$ for $\mu$ is an
{\em $(\epsilon,\delta)$-randomized approximation scheme} 
(or $(\epsilon,\delta)$-ras) if
\begin{equation}
\prob(|\hat \mu - \mu| > \epsilon \mu) \leq \delta.
\end{equation}
That is, the chance that the absolute relative error in the estimate is 
greater than $\epsilon$ is at most $\delta$ in an 
$(\epsilon,\delta)$-ras.  The goal is to create
an $(\epsilon,\delta)$-ras using
$X_1,\ldots,X_T$, where $T$ is a random variable that has a small mean.

For each of 
the applications mentioned earlier, 
it was shown how to build random variables
$X_i$ such that $\sd(X_i) \leq c \mu$ for a known constant $c$ that
is an easily computable function of the input.  
Given this restriction on the standard deviation, it is well known
how to generate an $(\epsilon,\delta)$-ras using at most 
$19.35 c^2 \epsilon^{-2}\ln(\delta^{-1})$ samples (plus lower order terms).
The details are discussed further in
Section~\ref{SEC:estimating}.

On the other hand, it is known from an application of Wald's sequential 
ratio test that any such algorithm requires at least
$\Omega(c^2 \epsilon^{-2} \ln(\delta^{-1}))$
samples on average (as shown in~\cite{dagumklr2000}), 
therefore this is the best possible up to 
the constant factor.  The question remains of what is the best constant
factor.

This work introduces a simple new algorithm for estimating 
$\mu$ that reduces this constant from 19.35 to 6.96.  
For all of the applications
listed above, this approach immediately improves the constant of
the running time by a factor of 2.78.

First define a nuisance factor that will appear in several results.  Let
\begin{equation}
f(\epsilon) = (1 - \epsilon)^{-2}(1 + \epsilon - \epsilon^2)^{-1}(1 + \epsilon).
\end{equation}
Since $f(\epsilon) = 1 + 2 \epsilon + O(\epsilon^2)$, 
the leading order terms of $\epsilon^{-2}$ and $\epsilon^{-2} f(\epsilon)$
are identical.

\begin{theorem}
Suppose $X_1,X_2,\ldots$ is an independent, identically distributed
sequence of random variables with
mean $\mu$ and standard deviation at most $c \mu$, where $c$ is
a known constant.  Then for $\epsilon \in (0,1/3)$ and 
$\delta \in (0,1)$,
it is possible
to find $\hat \mu$ such that 
$\prob(|\hat \mu - \mu| > \epsilon \mu) \leq \delta$ using
$X_1,\ldots,X_t$, where 
\begin{equation}
t = \left\lceil \left(\frac{c}{\epsilon} \right)^2 f(\epsilon) \right\rceil
    \left(2 \left\lceil\frac{\ln(2\delta^{-1})}{\ln(4/3)} 
    \right\rceil + 1 \right).
\end{equation}
\end{theorem}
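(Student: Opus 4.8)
The plan is to take $\hat\mu$ to be the median of $r$ independent ``one‑shot'' estimates, each computed from its own block of samples, and then to amplify the mediocre per‑block accuracy into the required $(\epsilon,\delta)$ guarantee. Write $n=\lceil (c/\epsilon)^2 f(\epsilon)\rceil$ and $k=\lceil \ln(2\delta^{-1})/\ln(4/3)\rceil$, and set $r=2k+1$, so that $t=nr$ exactly. Partition $X_1,\dots,X_t$ into $r$ disjoint blocks of size $n$, let $Y_1,\dots,Y_r$ be the per‑block estimates (these are i.i.d.\ because the blocks are disjoint and the $X_i$ are i.i.d.), and output $\hat\mu=\operatorname{median}(Y_1,\dots,Y_r)$. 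Two things then need to be proved: (i) a single block of $n$ samples already makes both one‑sided tails small, $\prob(Y_j>(1+\epsilon)\mu)\le \tfrac14$ and $\prob(Y_j<(1-\epsilon)\mu)\le\tfrac14$; and (ii) the median of $r=2k+1$ such blocks drives the overall failure probability below $\delta$.

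Step (ii) is the routine half. Since $\hat\mu>(1+\epsilon)\mu$ forces at least $k+1$ of the $2k+1$ blocks to satisfy $Y_j>(1+\epsilon)\mu$, that event has probability at most $\prob(\mathrm{Bin}(r,\tfrac14)\ge k+1)$ by independence and step (i); the exponential‑moment bound $\prob(\mathrm{Bin}(r,p)\ge r/2)\le (4p(1-p))^{r/2}$, valid for $p\le\tfrac12$, gives $(3/4)^{(2k+1)/2}$ with $p=\tfrac14$, and the symmetric argument bounds $\prob(\hat\mu<(1-\epsilon)\mu)$ by the same quantity. Hence $\prob(|\hat\mu-\mu|>\epsilon\mu)\le 2(3/4)^{(2k+1)/2}\le 2(3/4)^{k}$, and this is at most $\delta$ precisely because $k\ge \ln(2\delta^{-1})/\ln(4/3)$; the factor $2$ inside the logarithm is exactly the slack needed to absorb the union bound over the two tails together with dropping the exponent from $k+\tfrac12$ to $k$.

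Step (i) is where the real content lies, and the step I expect to be the main obstacle, because it is also where the nuisance factor $f(\epsilon)$ and the hypothesis $\epsilon\in(0,1/3)$ get consumed. A direct Chebyshev or one‑sided Cantelli bound on the block sample mean would force $n$ up to roughly $3(c/\epsilon)^2$ to make each one‑sided tail at most $\tfrac14$, whereas the theorem allows only $\lceil (c/\epsilon)^2 f(\epsilon)\rceil\approx(c/\epsilon)^2$. Closing this factor‑of‑three gap needs a sharper one‑shot estimator than the raw sample mean; I would build it via the construction of Section~\ref{SEC:estimating}, tracking how the three pieces $(1-\epsilon)^{-2}$, $(1+\epsilon-\epsilon^2)^{-1}$, and $(1+\epsilon)$ of $f(\epsilon)$ each arise from one of the successive comparisons in that argument, and verifying that $\epsilon<1/3$ keeps every quantity involved strictly positive and the relevant transformation monotone. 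Granting that per‑block estimate, combining (i) and (ii) with the identity $t=nr$ finishes the proof, and since $f(\epsilon)=1+2\epsilon+O(\epsilon^2)$ the leading‑order sample count is $\bigl(2/\ln(4/3)\bigr)(c/\epsilon)^2\ln(\delta^{-1})\approx 6.96\,(c/\epsilon)^2\ln(\delta^{-1})$.
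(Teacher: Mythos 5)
Your high-level skeleton is correct and matches the paper's: compute $2k+1$ independent block estimates, take the median, and drive the failure probability down exponentially via a binomial tail bound. Your step (ii) is in fact executed a bit more cleanly than the paper's: you invoke the Chernoff-type bound $\prob(\mathrm{Bin}(r,p)\ge r/2)\le (4p(1-p))^{r/2}$ directly on the binomial count, whereas the paper instead passes through the beta distribution of the order statistic of $2k+1$ uniforms and bounds a central binomial coefficient, picking up an extra $O(k^{-1/2})$ polynomial factor. Both yield a bound of the form $2(3/4)^k$ and the factor $2$ inside the logarithm of $k$ absorbs the union bound over the two tails, exactly as you say.

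However, there is a genuine and decisive gap at step (i), which you yourself correctly flag as ``where the real content lies'' and then do not fill. You observe that Chebyshev or Cantelli on the block sample mean alone would force $n \approx 3(c/\epsilon)^2$ samples per block to get a one-sided tail of $1/4$, and that the theorem allows only $n = \lceil (c/\epsilon)^2 f(\epsilon)\rceil$. You then say you would ``build a sharper one-shot estimator via the construction of Section~2,'' which is precisely the missing ingredient you were asked to supply. The paper's key idea is that the per-block estimate is \emph{not} the sample mean $S$, but the \emph{multiplicatively smoothed} estimate $S\cdot R$, where $R\sim\unifdist([1-\epsilon,1+\epsilon])$ is drawn independently of $S$ and at zero cost in $X$-samples. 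This auxiliary randomization leaves the mean at $\mu$ but breaks the worst-case two-point distribution concentrated at $\mu\pm\epsilon\mu$: for such a $Y$, exactly half of the boundary mass lands on the good side of $\mu+\epsilon\mu$ after multiplying by $R$, giving tails of $1/4$ without paying the Cantelli factor of $3$. The paper then shows via a reduction to two-point distributions (Lemmas~\ref{LEM:mixture}--\ref{LEM:points2}) and a one-dimensional optimization (Lemmas~\ref{LEM:f1}--\ref{LEM:f2}) that a block variance of $\epsilon^2\mu^2/f(\epsilon)$ already suffices, which is exactly where $f(\epsilon)$, the hypothesis $\epsilon<1/3$, and the three factors $(1-\epsilon)^{-2}$, $(1+\epsilon-\epsilon^2)^{-1}$, $(1+\epsilon)$ come from. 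Without the smoothing device and its accompanying optimization argument, your step (i) is an unproven assertion, so the proposal does not constitute a proof.
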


Note $2/\ln(4/3) \leq 6.96$, which gives the factor in
the leading order term mentioned earlier.

\section{Estimating $\mu$}
\label{SEC:estimating}

The classic estimate for $\mu$ uses sample averages of the $X_i$.  
Let $S_k = (X_1 + \cdots X_k)/k$.  Then $\mean[S_k] = \mu$ and 
$\sd(S_k) = \sd(X)/\sqrt{k}$.  
Say that the estimate {\em fails} if the absolute relative error 
$|(\hat \mu /\mu) - 1|$ is greater than $\epsilon$.
Using Chebyshev's
inequality gives 
$\prob(|(\hat \mu/\mu) - 1| > \epsilon) \leq c^2 /(\epsilon^{2}k).$
This gives a bound on the probability of failure that
only goes down polynomially in the number of samples $k$.

\subsection{Median of means}

A well known estimate with an 
exponentially small chance of failure goes back to at 
least~\cite{dyerf1991}.  The central idea is to look at the median
of several draws, each of which is the average of some fixed number
of draws of the original random variable.

Let $k = \lceil 8 (c/\epsilon)^2 \rceil$ draws of $X_i$.  So
Then $\mean[S_k] = \mu$ and $\sd(S_k) = \epsilon \mu/\sqrt{8}$.
Then Chebyshev's inequality gives
\begin{equation}
\prob(|S_k - \mu| \geq \epsilon \mu) \leq 1/8.
\end{equation}

The next step is to draw $W_1,\ldots,W_{2k + 1}$ iid from the same 
distribution as $S_k$.  Then it is highly likely that the median of
the $\{W_i\}$ values falls inside the region that has $7/8$ probability.

To be precise:
\begin{lemma}
\label{LEM:errorbound}
Suppose $\prob(R \leq a) \leq p$ and $\prob(R \geq b) \leq p$ for 
some $a < b$.  Then for $R_1,R_2,\ldots \sim R$ iid,
\begin{equation}
\prob(\median\{R_1,\ldots,R_{2k+1}\} \notin [a,b]) \leq 4 (\pi(k+1))^{-1/2}
  [4 p (1 - p)]^k.
\end{equation}
For $\prob(R \in [a,b]) \geq 1 - p$ then 
\begin{equation}
\prob(\median\{R_1,\ldots,R_{2k+1}\} \notin [a,b]) \leq 2 (\pi(k+1))^{-1/2}
  [4 p (1 - p)]^k.
\end{equation}
\end{lemma}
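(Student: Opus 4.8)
The plan is to turn the event ``the sample median lies outside $[a,b]$'' into a binomial tail probability and then bound that tail sharply enough to expose the $(\pi(k+1))^{-1/2}$ factor.

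First I would note that the median of $R_1,\dots,R_{2k+1}$, i.e.\ the $(k+1)$st order statistic, lies below $a$ only if at least $k+1$ of the $R_i$ lie below $a$, and lies above $b$ only if at least $k+1$ lie above $b$. Under the first set of hypotheses I union-bound these two events; since $\prob(R<a)\le\prob(R\le a)\le p$ and $\prob(R>b)\le\prob(R\ge b)\le p$, and the upper tail of a binomial is monotone in its success probability, each event has probability at most $\prob(\mathrm{Bin}(2k+1,p)\ge k+1)$, so we pick up a factor $2$. Under the second hypothesis, whichever way the median escapes $[a,b]$ there are at least $k+1$ of the $R_i$ outside $[a,b]$ --- a single event, of probability at most $\prob(\mathrm{Bin}(2k+1,p)\ge k+1)$ because $\prob(R\notin[a,b])\le p$ --- so there is no factor $2$. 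Hence both assertions reduce to the single estimate $\prob(\mathrm{Bin}(2k+1,p)\ge k+1)\le 2(\pi(k+1))^{-1/2}[4p(1-p)]^k$.

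To get that estimate I would write the tail as $\sum_{i=0}^{k}\binom{2k+1}{k+1+i}p^{k+1+i}(1-p)^{k-i}$, factor out $p^{k+1}(1-p)^k$, and bound the residual sum $\sum_{i=0}^{k}\binom{2k+1}{k+1+i}(p/(1-p))^{i}$ by replacing each binomial coefficient with the central one $\binom{2k+1}{k+1}$ (valid because $k+1+i$ is past the mode) and summing the geometric series, which converges since $p/(1-p)<1$; this produces the factor $(1-p)/(1-2p)$. I would then use the standard bound $\binom{2m}{m}\le 4^m/\sqrt{\pi m}$ in the shape $\binom{2k+1}{k+1}=\tfrac12\binom{2(k+1)}{k+1}\le 2\cdot 4^k/\sqrt{\pi(k+1)}$ and collect terms to obtain $\prob(\mathrm{Bin}(2k+1,p)\ge k+1)\le \frac{2p(1-p)}{1-2p}\cdot\frac{[4p(1-p)]^k}{\sqrt{\pi(k+1)}}$. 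Since $p(1-p)\le 1-2p$ for all $p$ up to $(3-\sqrt5)/2$ --- in particular for the value $p=1/8$ used in the median-of-means scheme --- the prefactor is at most $2$, which is exactly what is needed.

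The index shift, the geometric sum, and the central-binomial estimate are all routine; the one delicate point is extracting the $(\pi(k+1))^{-1/2}$ at all. The crude route --- bound $p^{k+1+i}(1-p)^{k-i}\le p^{k+1}(1-p)^k$ termwise and then sum the binomial coefficients to $2^{2k}$ --- only yields $p[4p(1-p)]^k$ and throws away the vital $1/\sqrt{k}$; keeping the binomial coefficients and paying a geometric-tail factor instead is what recovers it, at the (for these applications harmless) cost of a mild upper restriction on $p$, which a more careful analysis of the geometric tail could loosen. One should also be a touch careful to translate ``median below $a$'' into ``at least $k+1$ of the $R_i$ satisfy $R_i<a$'' rather than $R_i\le a$, so that the hypotheses $\prob(R\le a)\le p$ and $\prob(R\ge b)\le p$ are what gets used.
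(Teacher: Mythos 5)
Your proposal is correct, and it reaches the bound by a genuinely different route from the paper, even though at bottom both are estimating the same quantity. The paper first passes to the median of $2k+1$ iid uniforms (whose law is $\mathrm{Beta}(k+1,k+1)$), writes the failure probability as $\int_0^p [x(1-x)]^k\,dx$ times the normalizing constant, and bounds the integrand by the tangent line $x(1-x)\le p^2+(1-2p)x$ at $x=p$ before integrating; the central-binomial estimate then enters through the Beta normalization. You instead work with the binomial tail $\prob(\mathrm{Bin}(2k+1,p)\ge k+1)$ directly (which equals the same Beta tail), majorize each $\binom{2k+1}{k+1+i}$ by the central term, and sum the resulting geometric series. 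Both produce the prefactor $\tfrac{2p(1-p)}{1-2p}$ in front of $[4p(1-p)]^k/\sqrt{\pi(\cdot)}$ --- your square-root is in $k+1$, the paper's in $k+2$, so your constant is very slightly sharper --- and both therefore need $\tfrac{2p(1-p)}{1-2p}\le 2$, i.e.\ $p\le(3-\sqrt5)/2$, to land on the stated bound. You flag this restriction explicitly; the paper's ``simplifying and neglecting the $-p^k$ term then gives the result'' silently relies on the same inequality (and the lemma as stated omits it, though the paper only ever invokes it at $p=1/8$ and $p=1/4$, where it holds with room to spare). Your handling of the two hypotheses is cleaner than the paper's as well: you derive the two-sided bound with an explicit union bound over the two escape directions, whereas the paper proves the one-sided case in detail and dismisses the other as ``similar.'' One small thing worth tightening if you write this up formally: the bound $\binom{2k+1}{k+1+i}\le\binom{2k+1}{k+1}$ for $i\ge 0$ deserves a one-line justification (the coefficients of $\binom{2k+1}{j}$ are unimodal with peak at $j\in\{k,k+1\}$), but that is routine.
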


\ifdefined\exab 
[See the Appendix for the proof.]
\else 
\begin{proof}
  Suppose that $\prob(R \in [a,b]) \geq 1 - p$.  If $\prob(R \in [a,b]) = 1$
  the chance the median is not in $[a,b]$ is 0 and the inequality holds.

  Otherwise, 
  let $U$ be uniform over
  $[0,1]$, $R_{\text{in } [a,b]}$ 
  be the distribution of $R$ conditioned on $R \in [a,b]$,
  and $R_{\text{not in } [a,b]}$ 
  be the distribution of $R$ conditioned on $R \notin [a,b]$.

  Let $\ind(\text{expression})$ be the indicator function that is
  1 when the expression is true, and 0 when it is false.
  Then for independent random variables $U,$ $R_{\text{in} [a,b]}$, 
  $R_{\text{not in } [a,b]}$,
  \begin{equation*}
    R \sim R_{\text{in } [a,b]} \ind(U \leq \prob(R \in [a,b])) + 
    R_{\text{not in } [a,b]} \ind(U > \prob(R \in [a,b])).
  \end{equation*}

  With this representation, 
  the median of $2k + 1$ draws of $R$ will fall into $[a,b]$ if
  at least $k + 1$ of the $U_i$ fall into $[0,\prob(R \in [a,b])]$, which
  in turn occurs when the median of the $U_i$ is in $[0,1-p]$.  

  The median of the $U_i$ (call it $M$) is well known to have a beta
  distribution with density $f_M(x) = x^k(1 - x)^k\Gamma(2k+2)/\Gamma(k+1)^2$.
  Note $\prob(M > 1 - p) = \int_{1-p}^1 f_M(x) \ dx = 
  \int_{0}^p f_M(x) \ dx = \int_0^p [x(1 - x)]^k \Gamma(2k+2)/\Gamma(k+1)^2.$

  For $x \in [0,p]$, $x(1-x) \leq p^2 + (1 - 2p)x$, so 
  \begin{align*}
  \prob(M > 1- p) &\leq \int_0^p \frac{[p^2 + (1 - 2p)x]^k \Gamma(2k+1)}
     {\Gamma(k)^2} \\
   &= \frac{\Gamma(2k+2)}{\Gamma(k+1)^2} \cdot 
      \left[\frac{(p - p^2)^k - p^{2k}}{(k+1)(1 - 2p)}\right] \\
   &= \frac{(2k+2)!}{(k+1)!(k+1)!} \cdot \frac{k+1}{2k+2} \cdot 
      \left[\frac{p^k[(1 - p)^k - p^{k}]}{1 - 2p}\right]
  \end{align*}

  The factor $(2k+2)!/[(k+1)!]^2$ (known as a central binomial 
  coefficient) is well known to be at most $2^{2k+2}/\sqrt{\pi(k+2)}$  
  (see~\cite{koshy2008}).  Simplifying and neglecting the $-p^k$ term 
  then gives the result.
  
  The result where both $\prob(R \leq a) \leq p$ and $\prob(R \geq p)$
  is similar.  
\end{proof}
\fi

Applying this lemma to $W_i$ with probability $7/8$ of landing in the
desired location gives a chance of error at most 
$[4(7/8)(1 - 7/8)]^k = \exp(-\ln(7/16) k)$ for $k \geq 5$.  
Now the chance of failure is declining exponentially.

\begin{lemma}
The preceding procedure gives an $(\epsilon,\delta)$-ras
for $\mu$ that uses at most 
$\lceil 8 (c/\epsilon)^{2}\rceil 
 (2 \lceil \ln(\delta^{-1})/\ln(16/7) \rceil + 1)$ 
samples.
\end{lemma}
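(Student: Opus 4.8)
The plan is to run exactly the median-of-means recipe set up in this section, feeding the \emph{one-sided} tail bounds into the first inequality of Lemma~\ref{LEM:errorbound}. First I would fix the inner batch size $m = \lceil 8(c/\epsilon)^2\rceil$ and, for a single batch average $S = (X_1 + \cdots + X_m)/m$, record that $\mean[S] = \mu$ and $\var(S) = \var(X_1)/m \leq c^2\mu^2/m \leq \epsilon^2\mu^2/8$. Chebyshev's inequality then gives $\prob(|S - \mu| \geq \epsilon\mu) \leq 1/8$, so with $a = (1-\epsilon)\mu$ and $b = (1+\epsilon)\mu$ we have both $\prob(S \leq a) \leq 1/8$ and $\prob(S \geq b) \leq 1/8$.

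Next I would set $\ell = \lceil \ln(\delta^{-1})/\ln(16/7)\rceil$, draw $2\ell+1$ independent copies $W_1,\ldots,W_{2\ell+1}$ of $S$ (each from a fresh block of $m$ of the $X_i$, using $m(2\ell+1)$ samples in total), and put $\hat\mu = \median\{W_1,\ldots,W_{2\ell+1}\}$. Applying the first bound of Lemma~\ref{LEM:errorbound} with $R = S$, $p = 1/8$, and $k = \ell$ yields
\begin{equation*}
\prob(\hat\mu \notin [a,b]) \leq 4(\pi(\ell+1))^{-1/2}\left[4 \cdot \tfrac18 \cdot \tfrac78\right]^{\ell} = 4(\pi(\ell+1))^{-1/2}(7/16)^{\ell}.
\end{equation*}
Since $\hat\mu \in [a,b]$ is precisely the event $|\hat\mu - \mu| \leq \epsilon\mu$, and since $(7/16)^{\ell} = \exp(-\ell\ln(16/7)) \leq \exp(-\ln(\delta^{-1})) = \delta$ by the choice of $\ell$, it remains only to dispose of the prefactor $4(\pi(\ell+1))^{-1/2}$. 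For $\ell \geq 5$ this is at most $1$ because $\pi(\ell+1) \geq 16$, which settles that range; for the finitely many values $\ell \in \{1,2,3,4\}$ I would check directly that $4(\pi(\ell+1))^{-1/2}(7/16)^{\ell}$ still lies below the corresponding range of $\delta$, or simply state the conclusion for $\ell$ large enough as the section's discussion already does.

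Counting samples then gives exactly $m(2\ell+1) = \lceil 8(c/\epsilon)^2\rceil\bigl(2\lceil\ln(\delta^{-1})/\ln(16/7)\rceil + 1\bigr)$, as claimed. The only place any care is needed is the bookkeeping around the polynomial prefactor in Lemma~\ref{LEM:errorbound}: one must use the one-sided estimates $\prob(S \leq a) \leq 1/8$ and $\prob(S \geq b) \leq 1/8$ separately (not the two-sided bound $1/4$ one would get from a union bound), so that the geometric rate is $4p(1-p) = 7/16$ with $p = 1/8$, and then verify that the prefactor $4(\pi(\ell+1))^{-1/2}$ never spoils the target probability $\delta$ over the relevant range of $\ell$.
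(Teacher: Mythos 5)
Your overall structure mirrors the paper's proof, but there is one concrete misstep: you feed the one-sided bounds $\prob(S \leq a) \leq 1/8$, $\prob(S \geq b) \leq 1/8$ into the \emph{first} inequality of Lemma~\ref{LEM:errorbound}, which carries the larger prefactor $4(\pi(\ell+1))^{-1/2}$. But Chebyshev already hands you the stronger two-sided statement $\prob(S \in [a,b]) \geq 7/8$ directly, so the \emph{second} inequality applies with $p = 1/8$, and its prefactor $2(\pi(\ell+1))^{-1/2}$ is below $1$ for every $\ell \geq 1$; no case analysis on small $\ell$ is needed. Your closing remark about avoiding ``the two-sided bound $1/4$ from a union bound'' is pointing the wrong way: there is no union bound to do, since a single Chebyshev application yields the two-sided $1/8$ in one step, and the two-sided form of the lemma is the one you should reach for.

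The reason this matters is that your fallback for $\ell \in \{1,2,3,4\}$ does not actually go through. With a prefactor $4(\pi(\ell+1))^{-1/2} > 1$, the quantity $4(\pi(\ell+1))^{-1/2}(7/16)^{\ell}$ can exceed $\delta$ when $\delta$ sits near the bottom of the interval for which $\lceil\ln(\delta^{-1})/\ln(16/7)\rceil = \ell$. For instance, at $\ell = 1$ the admissible range is $\delta \in [7/16, 1)$, while the bound evaluates to roughly $0.70$; taking $\delta = 0.45$ therefore leaves a genuine gap, and similar near-boundary failures occur for $\ell = 2,3,4$. Switching to the two-sided form of Lemma~\ref{LEM:errorbound} repairs this uniformly with no extra work. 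Otherwise your batch size $m = \lceil 8(c/\epsilon)^2\rceil$, the rate $4p(1-p) = 7/16$, the choice $\ell = \lceil\ln(\delta^{-1})/\ln(16/7)\rceil$, and the final sample count $m(2\ell+1)$ all match the paper's argument.
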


\begin{proof}
An instance of $W$ takes $\lceil 8 (c/\epsilon)^{2} \rceil$ draws
from $X$ to produce.

To make $[4(7/8)(1-7/8)]^k \leq \delta$, 
$k \geq \ln(\delta^{-1})/\ln(16/7)$.  Since
$2k + 1$ draws of $W$ are necessary, the result follows.
\end{proof}

The method just described could be done with 
$W \sim S_{\lceil i(c/\epsilon)^2 \rceil}$ for any
$i$.  The choice of $i = 8$ minimizes the constant in the running time
given in the previous lemma.  In finding $k$, the $4(\pi(k+1))^{-1/2}$
factor was bounded by 1 for $k \geq 5$.  Using the full factor leaves
the first order term unchanged, only affecting lower order terms.

Note $8\cdot (1/\ln(16/7)) \cdot 2 \approx 19.35$, giving the constant
mentioned earlier.

\subsection{The new estimate}

The new method creates a new random variable $V$ such that 
\begin{equation}
\prob(|V - \mean[V]| \geq \epsilon \mu) \leq 1/4,
\end{equation}
but only using slightly more than $(c/\epsilon)^2$ 
draws from $X_i$.  To accomplish the same feat using Chebyshev's 
inequality would require $4(c/\epsilon)^2$ draws from the $X_i$.

Before describing the procedure, it will help to have an understanding
of why a random variable does not always lie inside the standard deviation.  
Suppose that $Y$ has 
mean $\mu$ and standard deviation $\epsilon \mu$.  An example of such
a random variable is 
$\prob(Y = \mu - \epsilon \mu) = \prob(Y = \mu + \epsilon \mu) = 1/2$.

Note that $\prob(Y < \mu + \epsilon \mu) = 1/2$, because fully half of the 
probability is sitting just outside the interval 
$(\mu - \epsilon \mu,\mu + \epsilon \mu)$.  The goal is to create
$V$ such that $\prob(V < \mu + \epsilon \mu) \geq 3/4$.

To create such a $V$, let $R$ be 
uniform over the interval $[1 - \epsilon,1 + \epsilon]$, and independent
of $Y$.  Then set $V = RY$.  So
\begin{equation}
\prob(YR < \mu + \epsilon \mu) = \frac{1}{2} + \frac{1}{2} \prob(R < 1)
 = \frac{3}{4}
\end{equation}
and
\begin{equation}
\prob(YR > \mu - \epsilon \mu) = \frac{1}{2} + \frac{1}{2} \prob(R > 1)
 = \frac{3}{4}.
\end{equation}

So for this particular $Y$, by applying this simple random scaling procedure,
the chance of falling into the tails has a bound as small as 
when $4$ samples from $Y$ are averaged together!

This is the idea behind the new estimate.
Given $\epsilon > 0$, for
a given draw $S_i \sim S$, draw 
$R_i \sim \unifdist([1-\epsilon,1+\epsilon])$
independently of $S_i$.  Next, let
\begin{equation}
W'_i = S_i R_i.
\end{equation}

This smoothed random variable still has $\mean[W'_i] = \mu$, moreover,
it is now much more likely to lie within a standard deviation of its mean!

\begin{lemma}
\label{LEM:failure}
Let $S$ have mean $\mu$ and standard deviation at most
$\epsilon \mu / \sqrt{f(\epsilon)}$, where $\epsilon \leq 1/3$.
Let $R$ be independent of $S$ and uniform over 
$[1-\epsilon,1+\epsilon]$.  Then
\begin{equation}
\prob(S R \leq \mu - \epsilon \mu) \leq \frac{1}{4}, \quad
\prob(S R \geq \mu + \epsilon \mu) \leq \frac{1}{4}.
\end{equation}
\end{lemma}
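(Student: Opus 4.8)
The plan is to turn the statement into a two-moment extremal problem and close it with a quadratic majorant. First I would rescale, writing $Z = S/\mu$, so that $Z$ has mean $1$ and variance $\sigma^{2} \le \epsilon^{2}/f(\epsilon)$, and it suffices to bound $\prob(ZR \le 1-\epsilon)$ and $\prob(ZR \ge 1+\epsilon)$. Conditioning on $Z = z$ and integrating out $R$ — which only uses that $R$ is uniform on $[1-\epsilon,1+\epsilon]$ — gives the exact expression $\prob(ZR \le 1-\epsilon \mid Z = z) = g(z)$, where $g$ is continuous and non-increasing, equals $1$ for $z \le z_{1} := (1-\epsilon)/(1+\epsilon)$, equals $\tfrac{(1-\epsilon)(1-z)}{2\epsilon z}$ for $z \in [z_{1},1]$, and equals $0$ for $z \ge 1$; note $g(z_{1}) = 1$ and $g(1) = 0$, so $g$ has a concave corner at $z_{1}$ and a convex corner at $1$. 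Hence $\prob(ZR \le 1-\epsilon) = \mean[g(Z)]$, and the answer depends on $Z$ only through its first two moments.

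The core step is to find a quadratic $q$ with $q \ge g$ on all of $\mathbb{R}$, so that $\mean[g(Z)] \le \mean[q(Z)]$. A perfect square $q(z) = (az+b)^{2}$ is the natural shape, since $\mean[(aZ+b)^{2}] = (a+b)^{2} + a^{2}\sigma^{2}$ uses exactly the two available moments and automatically respects $g \ge 0$ on $[1,\infty)$. To pin down $a$ and $b$ I would impose that $q$ sit above the flat branch $g \equiv 1$, which is convenient to enforce by $q(z_{1}) = 1$, and that $q$ dominate the curved branch $\tfrac{(1-\epsilon)(1-z)}{2\epsilon z}$, which is achieved by making $q$ tangent to it at a single interior point $z^{*} \in (z_{1},1)$; convexity of $1/z$ then reduces the verification that $q \ge g$ (over the three pieces) to a short case check. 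These conditions determine $a$, $b$, and $z^{*}$ explicitly in terms of $\epsilon$, and I expect the hypothesis $\epsilon \le 1/3$ to be precisely what guarantees that $z^{*}$ falls in $(z_{1},1)$ with $a,b$ real and the square genuinely above $g$.

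Taking expectations then gives $\mean[g(Z)] \le (a+b)^{2} + a^{2}\epsilon^{2}/f(\epsilon)$, and it remains to check this is at most $1/4$. Substituting the explicit $a$ and $b$, this becomes an algebraic inequality in $\epsilon$ that holds exactly because $f(\epsilon) = (1-\epsilon)^{-2}(1+\epsilon-\epsilon^{2})^{-1}(1+\epsilon)$ — in effect $f$ is reverse-engineered so the bound just closes. The upper tail, $\prob(ZR \ge 1+\epsilon) \le 1/4$, is handled by the mirror construction: conditioning on $Z$ now yields $\tilde g$, which is $0$ on $(-\infty,1]$, equals $\tfrac{(1+\epsilon)(z-1)}{2\epsilon z}$ on $[1,z_{2}]$ with $z_{2} := (1+\epsilon)/(1-\epsilon)$, and is $1$ beyond $z_{2}$; one majorizes $\tilde g$ by a square anchored at $z_{2}$, and since $1/z < 1$ on that branch it lies below its chord, so the constraint on $\sigma^{2}$ is no stronger and the same SD bound suffices.

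The main obstacle is the second and third steps together: producing the dominating square (equivalently, solving for the tangency point $z^{*}$) in closed form, verifying it really dominates $g$ everywhere on the line, and pushing through the algebra that collapses $(a+b)^{2} + a^{2}\epsilon^{2}/f(\epsilon)$ to $\le 1/4$ — it is this last computation that both explains the precise form of $f(\epsilon)$ and forces the restriction $\epsilon \le 1/3$.
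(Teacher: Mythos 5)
Your proposal takes a genuinely different route from the paper, and in outline it is a sound one.  The paper reduces to the \emph{primal} extremal problem: it decomposes $Y$ as a mixture, applies Jensen's inequality to replace each mixture component by its mean, and thereby shows the worst case is a two-point distribution whose parameters can be optimized directly (Lemmas~\ref{LEM:mixture}--\ref{LEM:points2} and the functions $h$, $h_2$).  You instead work with the \emph{dual}: write the tail probability as $\mean[g(Z)]$ and dominate $g$ by a quadratic $(az+b)^2$ so that only the first two moments of $Z$ enter.  Correctly executed, these are two faces of the same moment problem and must produce the same constraint on $\sd(S)$, and your version would avoid the mixture/Jensen machinery entirely.

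There is, however, a concrete error in how you apportion the work between the two tails, and it is the step you wave through.  The middle branch of your $g$ is $\tfrac{(1-\epsilon)(1-z)}{2\epsilon z}=\tfrac{1-\epsilon}{2\epsilon}\bigl(\tfrac1z-1\bigr)$, proportional to $+1/z$ and hence \emph{convex} on $[z_1,1]$; it lies \emph{below} the chord from $(z_1,1)$ to $(1,0)$.  The middle branch of $\tilde g$ is $\tfrac{(1+\epsilon)(z-1)}{2\epsilon z}=\tfrac{1+\epsilon}{2\epsilon}\bigl(1-\tfrac1z\bigr)$, proportional to $-1/z$ and hence \emph{concave} on $[1,z_2]$; it lies \emph{above} its chord.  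This is the opposite of what you assert.  Consequently the upper tail $\prob(ZR\ge 1+\epsilon)$ is the harder of the two to majorize by a quadratic and is the binding constraint; the paper records exactly this, noting that for $\epsilon\le 1/3$ the bound from Lemma~\ref{LEM:f1} (the $1+\epsilon$ side) is stronger than that from Lemma~\ref{LEM:f2} (the $1-\epsilon$ side).  It is from $\tilde g$, not $g$, that the precise form of $f(\epsilon)$ is reverse-engineered; carrying out your lower-tail square instead produces the weaker requirement $\alpha^2\le\epsilon^2(1+\epsilon)^3/(1-\epsilon)$, which is not $\epsilon^2/f(\epsilon)$.  So the plan is viable, but the hard computation must be done for $\tilde g$, where the concavity of the middle branch (and the extra check that the square exceeds $1$ for $z\ge z_2$) is exactly what makes the closed-form majorant delicate; the argument that the other side ``is no stronger and the same SD bound suffices'' cannot be transplanted as you describe.
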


This lemma is the heart of the new estimate, and 
will be proved in the next section.  Using 
this lemma together with Lemma~\ref{LEM:errorbound} immediately
gives the following result.

\begin{lemma}
\label{LEM:main}
Fix $\epsilon \in (0,1/3)$, $t$ a positive integer, 
and let $X$ be a random variable
with mean $\mu$ and standard deviation at most $c\mu$.  
For $i = 1,2,\ldots,t$, let $S_i$ be the sample
average of $\lceil (c/\epsilon)^2 f(\epsilon)\rceil$ 
iid draws from the distribution
of $X$.  Independently, draw 
$R_1,\ldots,R_t$ uniformly from $[1-\epsilon,1+\epsilon]$.  
Let
\begin{equation}
t = \lceil (cf(\epsilon)/\epsilon)^{2}\rceil 
 (2 \lceil \ln(2\delta^{-1})/
 \ln(4/3) \rceil + 1).
\end{equation}
Then it holds that 
\begin{equation}
\prob(|\median\{S_1R_1,\ldots,S_tR_t\} - \mu| > \epsilon \mu) \leq \delta.
\end{equation}
\end{lemma}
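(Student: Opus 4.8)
The plan is to assemble the result from the two lemmas already in hand, so the proof is essentially bookkeeping. First I would check the hypothesis of Lemma~\ref{LEM:failure}: since each $S_i$ is the sample average of $n = \lceil (c/\epsilon)^2 f(\epsilon)\rceil$ iid draws of $X$, we have $\mean[S_i] = \mu$ and $\sd(S_i) = \sd(X)/\sqrt{n} \leq c\mu/\sqrt{n}$. Because $n \geq (c/\epsilon)^2 f(\epsilon)$, this gives $\sd(S_i) \leq c\mu/(c\sqrt{f(\epsilon)}/\epsilon) = \epsilon\mu/\sqrt{f(\epsilon)}$, exactly the bound Lemma~\ref{LEM:failure} requires. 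Hence with $R_i \sim \unifdist([1-\epsilon,1+\epsilon])$ independent of $S_i$, the random variable $W'_i = S_i R_i$ satisfies $\prob(W'_i \leq \mu - \epsilon\mu) \leq 1/4$ and $\prob(W'_i \geq \mu + \epsilon\mu) \leq 1/4$.

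Next I would apply Lemma~\ref{LEM:errorbound} with $R = W'_i$, $a = \mu - \epsilon\mu$, $b = \mu + \epsilon\mu$, and $p = 1/4$. The $W'_i$ are iid (the pairs $(S_i,R_i)$ are independent across $i$ and each $R_i$ is drawn independently of its $S_i$), so the lemma gives, for $t = 2k+1$,
\begin{equation*}
\prob(\median\{W'_1,\ldots,W'_{2k+1}\} \notin [\mu - \epsilon\mu, \mu + \epsilon\mu]) \leq 4(\pi(k+1))^{-1/2}\,[4\cdot\tfrac14\cdot\tfrac34]^k = 4(\pi(k+1))^{-1/2}(3/4)^k.
\end{equation*}
Bounding the prefactor $4(\pi(k+1))^{-1/2}$ crudely by $2$ (valid once $k+1 \geq 4/\pi$, i.e. for all $k \geq 1$), the right side is at most $2(3/4)^k$. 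To force this below $\delta$ it suffices to take $k \geq \ln(2\delta^{-1})/\ln(4/3)$, so $k = \lceil \ln(2\delta^{-1})/\ln(4/3)\rceil$ works, and then $t = 2k+1 = 2\lceil \ln(2\delta^{-1})/\ln(4/3)\rceil + 1$. Each $W'_i$ consumes $n = \lceil (cf(\epsilon)/\epsilon)^2\rceil$ draws of $X$ — here I should note the mild identity $\lceil (c/\epsilon)^2 f(\epsilon)\rceil = \lceil (cf(\epsilon)/\epsilon)^2\rceil$ is just the way the count is written, since $f(\epsilon)$ is a positive real and one multiplies $(c/\epsilon)^2$ by it — so the total sample count is $n$ times $t$, matching the stated expression. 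Finally, observing that the event $|\median\{S_1R_1,\ldots,S_tR_t\} - \mu| > \epsilon\mu$ is precisely the event that the median lies outside $[\mu - \epsilon\mu, \mu + \epsilon\mu]$ completes the proof.

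There is no real obstacle here; the only points needing a line of care are (i) verifying the standard-deviation hypothesis propagates correctly through the $\sqrt{n}$ factor and the ceiling, (ii) confirming independence of the $W'_i$ so that Lemma~\ref{LEM:errorbound} applies, and (iii) tracking the factor $2$ that enters $\ln(2\delta^{-1})$ because the prefactor $4(\pi(k+1))^{-1/2}$ was only bounded by $2$ rather than by $1$ — this is exactly why the $2\delta^{-1}$ appears rather than $\delta^{-1}$, and why the constant in the leading-order term is $2/\ln(4/3) \leq 6.96$ rather than $1/\ln(4/3)$. I would state each of these explicitly so the reader sees where every piece of the formula for $t$ originates.
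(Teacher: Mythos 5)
Your proof is correct and follows exactly the route the paper intends: verify that each $S_i$ satisfies $\sd(S_i) \leq \epsilon\mu/\sqrt{f(\epsilon)}$, invoke Lemma~\ref{LEM:failure} to get the two-sided $1/4$ bounds for $W'_i = S_i R_i$, and then feed $p=1/4$ into the first (two-sided) bound of Lemma~\ref{LEM:errorbound}, bound the prefactor $4(\pi(k+1))^{-1/2}$ by $2$, and solve for $k$ — which is precisely where the $\ln(2\delta^{-1})$ comes from. The paper gives no more detail than this (it says the result follows ``immediately'' from the two lemmas), so your write-up actually supplies the missing bookkeeping.

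One correction: you call $\lceil (c/\epsilon)^2 f(\epsilon)\rceil = \lceil (cf(\epsilon)/\epsilon)^2\rceil$ a ``mild identity,'' but these are not equal — the right side equals $\lceil (c/\epsilon)^2 f(\epsilon)^2\rceil$, which has an extra factor of $f(\epsilon)$. This is a typo in the paper's displayed formula for $t$; the correct count of draws per $S_i$ is $\lceil (c/\epsilon)^2 f(\epsilon)\rceil$, as stated in the body of Lemma~\ref{LEM:main} and in Theorem~1. You should flag the discrepancy as an error in the statement rather than asserting a false equality; the rest of your argument does not use the bad form and so goes through unchanged.
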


\section{Proof of Lemma~\ref{LEM:failure}}

The proof of Lemma 3 takes the following steps.  Let
$S$ be a random variable with mean $\mu$ and standard deviation at
most $\alpha \mu$.  The goal is to show that for 
$R \sim \unifdist([1-\epsilon,1+\epsilon])$, 
$\prob(SR \leq \mu - \epsilon \mu)$ and 
$\prob(SR \geq \mu + \epsilon \mu)$ are both at most $1/4$.
\begin{enumerate}
\item{Eliminate the $\mu$ factor by considering $Y = S/\mu$ 
so $\mean[Y] = 1$ and $\sd(Y) \leq \alpha$.  The new
goal is to upper bound $q_{+} = \prob(YR \geq 1 + \epsilon)$ and 
$q_{-} = \prob(YR \leq 1 - \epsilon)$.}
\item{First look at $q_{+}$.  
      Write $Y$ as a mixture of random variables 
      $Y_1$ and $Y_2$ where there is a value
      $y$ such that $\prob(Y_1 \leq y) = \prob(Y_2 > y) = 1$. 
      (Lemma~\ref{LEM:mixture}.)}
\item{Show that for a proper choice of $y$, 
      the value $q_{+}$ is maximized when $Y_1$ and $Y_2$ are 
      not random, but deterministic functions of $\epsilon$, and 
      when the standard deviation of $Y$  equals the upper bound $\alpha \mu$.
      (Lemmas~\ref{LEM:points} and~\ref{LEM:points2}.)} 
\item{Show that for $\sd(Y) = \epsilon/\sqrt{f(\epsilon)}$, 
      $q_{+}$ is at most $1/4$ 
      given that $Y_1$ and $Y_2$ are each concentrated at
      a single value. (Lemma~\ref{LEM:f2}.)}
\item{The proof for $q_{-}$ is then accomplished in a similar fashion.}
\end{enumerate}

To begin, note that 
if $Y < 1$, then $YR < 1 + \epsilon$ always.
When $Y > (1 + \epsilon)/(1 - \epsilon)$, 
then $YR > 1 + \epsilon$.  Then there is the case in the middle, where
$Y \in [1,(1 + \epsilon)/(1 - \epsilon)]$ and it could be true that
$YR \leq 1 + \epsilon$ if $R$ is small enough.  For that reason, define
the intervals $I_1 = (-\infty,1)$,
$I_2 = [1,(1+\epsilon)/(1-\epsilon)]$, 
$I_3 = ((1+\epsilon)/(1 - \epsilon),\infty)$.  

Write $Y$ as a mixture of two random variables,
one of which has support on $I_1$, and the other has support on $I_2 \cup I_3$.
\begin{lemma}
\label{LEM:mixture}
For any random variable $Y$, there exist $Y'$, $C$, $Y_1$, and $Y_2$ 
such that $Y' \sim Y$ and
\begin{equation}
Y' = C Y_1 + (1 - C) Y_2,
\end{equation}
where $Y_1$ falls into $I_1$ with probability 1, $Y_2$ falls into
$I_2 \cup I_3$ with probability 1, and $C$ is a Bernoulli random variable
with
$\prob(C = 1) = \prob(Y \in I_1)$ and
$\prob(C = 0) = \prob(Y \in I_2 \cup I_3)$.
\end{lemma}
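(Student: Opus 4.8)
The plan is to take $Y_1$ and $Y_2$ to be the conditional laws of $Y$ given membership in $I_1$ and in $I_2 \cup I_3$ respectively, to take $C$ to be an independent coin with the prescribed bias, and to build $Y'$ by flipping $C$ and reading off the corresponding conditional variable. Concretely, write $p = \prob(Y \in I_1)$ and, assuming for the moment $p \in (0,1)$, let $Y_1$ have the law of $Y$ conditioned on $\{Y \in I_1\}$, let $Y_2$ have the law of $Y$ conditioned on $\{Y \in I_2 \cup I_3\}$, and let $C \sim \text{Bernoulli}(p)$, with $C$, $Y_1$, $Y_2$ mutually independent. Then set $Y' = C Y_1 + (1-C) Y_2$.

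The only thing that really needs checking is that $Y' \sim Y$. For this I would verify that for every Borel set $B$,
\[
\prob(Y' \in B) = p\,\prob(Y_1 \in B) + (1-p)\,\prob(Y_2 \in B).
\]
Since $I_1$, $I_2$, $I_3$ partition the real line, $\{Y \in I_2 \cup I_3\} = \{Y \notin I_1\}$, so the two probabilities on the right are $\prob(Y \in B \mid Y \in I_1)$ and $\prob(Y \in B \mid Y \notin I_1)$; multiplying through by $p$ and $1-p$ and unfolding the definition of conditional probability collapses the sum to $\prob(Y \in B \cap I_1) + \prob(Y \in B \cap I_1^c) = \prob(Y \in B)$. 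By construction $Y_1 \in I_1$ and $Y_2 \in I_2 \cup I_3$ with probability one, and $C$ has exactly the required distribution, so all the stated properties hold.

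The one genuine wrinkle is the degenerate cases $p = 0$ and $p = 1$, where one of the conditionings is undefined. When $p = 0$, take $C \equiv 0$, let $Y_1$ be the constant $0 \in I_1$ (its law is irrelevant since it is never selected), and let $Y_2 = Y$, which lies in $I_2 \cup I_3$ almost surely, so that $Y' = C Y_1 + (1-C) Y_2 = Y$; the case $p = 1$ is symmetric with $C \equiv 1$, $Y_1 = Y$, and $Y_2$ an arbitrary constant in $I_2$. I do not expect any real obstacle: the lemma is just the standard fact that a distribution is the mixture of its restrictions to the cells of a partition, and the proof is essentially bookkeeping, the only thing one must not forget being the split into $p \in \{0,1\}$ versus $p \in (0,1)$.
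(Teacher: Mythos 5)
Your proposal is correct and follows essentially the same route as the paper: both define $Y_1$ and $Y_2$ as the conditional laws of $Y$ on $I_1$ and on $I_2 \cup I_3$ respectively, take $C$ to be an independent Bernoulli with the stated bias, and handle the degenerate cases $p \in \{0,1\}$ separately. You simply spell out the distributional verification in more detail than the paper, which states it as immediate.
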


\begin{proof}
If either $\prob(Y \in I_1)$ or $\prob(Y \in I_2 \cup I_3)$ equal 1, the
result is trivial, otherwise, let $Y_1 \sim [Y|Y \in I_1]$,
$Y_2 \sim [Y|Y \in I_2 \cup I_3]$, and the result follows.
\end{proof}

Since $Y' \sim Y$, 
$\prob(YR \leq 1 + \epsilon) = \prob(Y'R \leq 1 + \epsilon)$.
So without loss of generality work with $Y'$ from here on out.

Recall that the probability that a event occurs is just the 
expected value of the indicator function of that event.  So
\begin{equation}
\prob(Y'R \leq 1 + \epsilon) = \mean[\ind(Y'R \leq 1 + \epsilon)].
\end{equation}
We also need the following well known fact about conditional expectation
(see for instance~\cite{durrett2010}).

\begin{fact}
If $A$ and $B$ are random variables such that $A$ and $[A|B]$
are both integrable, then
\begin{equation}
\mean[A] = \mean[\mean[A|B]].
\end{equation}
\end{fact}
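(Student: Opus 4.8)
The statement is the tower property (law of iterated expectations), and the plan is to obtain it directly from the defining property of the conditional expectation $\mean[A \mid B]$. Recall that $\mean[A \mid B]$ is, by definition, any $\sigma(B)$-measurable and integrable random variable $Z$ satisfying the partial-averaging identity $\mean[Z \ind_G] = \mean[A \ind_G]$ for every event $G$ in the $\sigma$-algebra generated by $B$; integrability of $A$ is what guarantees such a $Z$ exists and is almost surely unique. The one move I would make is to specialize this identity to $G = \Omega$, the whole sample space: since $\Omega$ lies in every $\sigma$-algebra and $\ind_\Omega \equiv 1$, the identity collapses to $\mean[Z] = \mean[A]$, which is exactly $\mean[\mean[A \mid B]] = \mean[A]$. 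The hypothesis that $[A \mid B]$ is integrable is precisely what makes the left-hand side a well-defined real number.

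If instead one wanted a proof that does not presuppose the abstract construction, I would fall back on the ``standard machine.'' When $B$ takes countably many values this is a short computation: conditioning on the value of $B$ and then reorganizing the resulting double sum,
\begin{equation*}
\mean[\mean[A \mid B]] = \sum_b \mean[A \mid B = b]\,\prob(B = b)
 = \sum_b \sum_a a\,\prob(A = a,\, B = b) = \sum_a a\,\prob(A = a) = \mean[A].
\end{equation*}
For a general $B$ I would verify the identity first for $A = \ind_E$ (where it is just $\prob(E) = \mean[\prob(E \mid B)]$), extend it to simple $A$ by linearity, to nonnegative $A$ by monotone convergence, and finally to integrable $A$ via the decomposition $A = A^{+} - A^{-}$.

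I do not anticipate a genuine obstacle here: this is a foundational fact, and the only care needed is bookkeeping — keeping every expectation finite (which the integrability hypotheses supply) and justifying the interchange of sums or limits with expectation in the elementary route, via Tonelli's theorem or monotone convergence. In the way the fact is applied in this paper the relevant random variables and indicators are bounded on the events of interest, so these technicalities never bite, and the single-line argument from the defining property is all that is needed.
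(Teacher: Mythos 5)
Your argument is correct. The paper does not actually prove this statement — it states it as a ``well known fact'' and cites a textbook (Durrett) — so there is no proof to compare against; your derivation from the defining partial-averaging identity (specializing to $G = \Omega$), with the ``standard machine'' fallback for the elementary route, is exactly the standard textbook proof the citation points to.
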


Hence
\begin{equation}
\prob(Y'R \leq 1 + \epsilon) = 
  \mean[\mean[\ind(Y'R \leq 1 + \epsilon)|Y']
\end{equation}
and our analysis can start with 
the inside expectation $\mean[\ind(Y'R \leq 1 + \epsilon)|Y']$.

\begin{lemma}
\begin{equation}
\mean[\ind(Y'R \leq 1 + \epsilon|Y'] 
 = \ind(Y' \in I_1) + \ind(Y' \in I_2)
   [((1 + \epsilon)/Y') - (1 - \epsilon)](2\epsilon)^{-1}.
\end{equation}
\end{lemma}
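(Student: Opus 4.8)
The plan is to evaluate the inner conditional expectation pointwise. Conditioning on $\{Y' = y\}$ and using that $R \sim \unifdist([1-\epsilon,1+\epsilon])$ is independent of $Y'$, we have $\mean[\ind(Y'R \leq 1+\epsilon)\mid Y' = y] = \prob(yR \leq 1+\epsilon)$, which is a deterministic function of $y$. I would then split according to which of the three intervals $I_1,I_2,I_3$ contains $y$, verify the claimed value in each case, and reassemble.

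For $y \in I_1 = (-\infty,1)$: since $\epsilon < 1$ the random variable $R$ is almost surely positive and at most $1+\epsilon$, so if $y \leq 0$ then $yR \leq 0 < 1+\epsilon$, while if $0 < y < 1$ then $yR \leq y(1+\epsilon) < 1+\epsilon$; either way $\prob(yR \leq 1+\epsilon) = 1$, matching the coefficient of $\ind(Y' \in I_1)$. For $y \in I_3 = ((1+\epsilon)/(1-\epsilon),\infty)$: here $y > 0$ and $yR \geq y(1-\epsilon) > 1+\epsilon$ almost surely, so $\prob(yR \leq 1+\epsilon) = 0$, which is why no $I_3$ term appears. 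The only case with content is $y \in I_2 = [1,(1+\epsilon)/(1-\epsilon)]$: there $y > 0$, so $yR \leq 1+\epsilon$ if and only if $R \leq (1+\epsilon)/y$, and the constraints $y \geq 1$ and $y \leq (1+\epsilon)/(1-\epsilon)$ are exactly what force the threshold $(1+\epsilon)/y$ to lie inside the support $[1-\epsilon,1+\epsilon]$ of $R$; integrating the uniform density $(2\epsilon)^{-1}$ from $1-\epsilon$ up to $(1+\epsilon)/y$ yields $[((1+\epsilon)/y) - (1-\epsilon)](2\epsilon)^{-1}$, as claimed.

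Finally I would combine the three cases into the single displayed identity, observing that the $I_2$ expression reduces to $1$ at the shared endpoint $y = 1$ and to $0$ at $y = (1+\epsilon)/(1-\epsilon)$, so the piecewise description agrees with cases $I_1$ and $I_3$ at the boundaries and the formula is coherent. There is no serious obstacle here; the only points requiring care are tracking the sign of $y$ in case $I_1$ (the variable $Y'$ is not assumed nonnegative) and checking that the bounds defining $I_2$ put $(1+\epsilon)/y$ within the range of $R$, so that the computed integral is neither truncated below $1-\epsilon$ nor extended past $1+\epsilon$.
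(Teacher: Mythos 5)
Your proof is correct and follows essentially the same route as the paper: condition on the value of $Y'$, split into the three intervals $I_1,I_2,I_3$, and compute $\prob(yR \leq 1+\epsilon)$ for a uniform $R$ in each case. You are in fact slightly more careful than the paper in case $I_1$, where the paper implicitly rewrites the event as $\{R \leq (1+\epsilon)/Y'\}$ (which is only valid when $Y' > 0$) while you explicitly handle $y \leq 0$ separately.
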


\begin{proof}
Let $f(R,Y') = \ind(R \leq (1 + \epsilon)/Y')$.  Then 
\begin{equation}
f(R,Y') = f(R,Y') \ind(Y' \in I_1) + 
 f(R,Y') \ind(Y' \in I_2) + f(R,Y') \ind(Y' \in I_3).
\end{equation}
As noted earlier, $f(R,Y') \ind(Y' \in I_3) = 0$ and 
$f(R,Y') \ind(Y' \in I_1) = \ind(Y' \in I_1).$  That means
\begin{equation}  
\mean[f(R,Y')|Y'] = \mean[\ind(Y' \in I_1)|Y']
 + \mean[\ind(Y' \in Y_2) f(R,Y')|Y'].
\end{equation}

Since $\ind(Y' \in I_2)$ is measurable with respect to $Y'$,
$\mean[\ind(Y' \in I_1)|Y'] = \ind(Y' \in I_1).$  In the expectation
$\mean[\ind(Y' \in I_2)f(R,Y')|Y']$, treat $Y'$ as a constant.  Since
$R$ is uniform over $[1-\epsilon,1+\epsilon]$, the chance that it
is at most $(1 - \epsilon)/Y'$ when $Y' \in I_2$ is
$[[(1 + \epsilon)/Y'] - (1 - \epsilon)]/[(1 + \epsilon) - (1 - \epsilon)]$.
Hence
\begin{equation}
\mean[f(R,Y')|Y'] = \ind(Y' \in I_1) + \ind(Y' \in I_2)
 [((1 + \epsilon)/Y')  - (1 - \epsilon)](2\epsilon)^{-1}.
\end{equation}
\end{proof}

\begin{lemma}
Let $p = \prob(Y' \in I_1)$.  Then
\begin{equation}
\label{EQN:p}
\mean[\ind(RY' \leq 1 + \epsilon)]
 = p + (1 - p)
 \mean\left[\ind(Y_2 \in I_2)\frac{((1 + \epsilon)/Y_2) 
  - (1 - \epsilon)}{2\epsilon}\right].
\end{equation}
\end{lemma}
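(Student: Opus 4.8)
The plan is to apply the Fact on conditional expectation to the formula established in the preceding lemma, and then rewrite the resulting expression in terms of the mixture components $C$ and $Y_2$ supplied by Lemma~\ref{LEM:mixture}.

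First I would invoke the Fact to write
\begin{equation}
\mean[\ind(RY' \leq 1 + \epsilon)] = \mean\bigl[\mean[\ind(RY' \leq 1 + \epsilon) \mid Y']\bigr],
\end{equation}
and substitute the expression from the preceding lemma for the inner conditional expectation. By linearity the right-hand side becomes $\mean[\ind(Y' \in I_1)]$ plus $\mean\bigl[\ind(Y' \in I_2)\,[((1+\epsilon)/Y') - (1-\epsilon)](2\epsilon)^{-1}\bigr]$. The first term is $\prob(Y' \in I_1)$, which equals $\prob(Y \in I_1) = p$ because $Y' \sim Y$.

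The crux is the second term. The intervals $I_1$, $I_2$, $I_3$ are pairwise disjoint, and in the representation $Y' = CY_1 + (1-C)Y_2$ one has $Y_1 \in I_1$ and $Y_2 \in I_2 \cup I_3$ almost surely; hence the event $\{Y' \in I_2\}$ can occur only when $C = 0$, and on that event $Y' = Y_2$. Therefore $\ind(Y' \in I_2) = \ind(C = 0)\,\ind(Y_2 \in I_2)$ almost surely, and inside the bracket $Y'$ may be replaced by $Y_2$. Since the mixture construction of Lemma~\ref{LEM:mixture} makes $C$, $Y_1$, and $Y_2$ mutually independent, the factor $\ind(C = 0)$ pulls out of the expectation as $\prob(C = 0) = 1 - p$, leaving exactly $(1-p)\,\mean\bigl[\ind(Y_2 \in I_2)\,[((1+\epsilon)/Y_2) - (1-\epsilon)](2\epsilon)^{-1}\bigr]$. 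Adding the two pieces yields \eqref{EQN:p}.

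The only point requiring a little care — and it is routine — is justifying the substitution $Y' \mapsto Y_2$ on the event $\{Y' \in I_2\}$ and the factoring out of $\ind(C = 0)$, both of which rest on the disjointness of the $I_j$ and on the independence built into Lemma~\ref{LEM:mixture}; the degenerate cases $p = 0$ and $p = 1$ can be checked directly and present no obstacle.
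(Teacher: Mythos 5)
Your proof is correct and takes essentially the same approach as the paper: apply the tower property to the inner conditional expectation from the preceding lemma, observe that the first term gives $p$, and reduce the second term to an expectation over $Y_2$ by exploiting the mixture structure. The only cosmetic difference is that the paper factors the second term by conditioning on $C$ and applying the tower property a second time, whereas you write $\ind(Y' \in I_2) = \ind(C=0)\,\ind(Y_2 \in I_2)$ directly and then pull out $\prob(C=0)$ by the independence built into the mixture construction; both rest on the same observations.
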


\begin{proof}
From the last lemma
\[
\mean[f(R,Y')] 
 = \mean[\ind(Y' \in I_1)] + 
   \mean[\ind(Y' \in I_2)
 [((1 + \epsilon)/Y')  - (1 - \epsilon)](2\epsilon)^{-1}].
\]

Now $\mean[\ind(Y' \in I_2)] = \prob(Y' \in I_2) = p$.  For the 
second term,
\begin{align*}
\mean&[\ind(Y' \in I_2)
 [((1 + \epsilon)/Y')  - (1 - \epsilon)](2\epsilon)^{-1}] \\
 &= \mean[\mean[\ind(Y' \in I_2)
 [((1 + \epsilon)/Y')  - (1 - \epsilon)](2\epsilon)^{-1}|C]] \\
 &= \prob(C = 1)(0) + \prob(C = 0)
    \mean[\ind(Y_2 \in I_2)
 [((1 + \epsilon)/Y_2)  - (1 - \epsilon)](2\epsilon)^{-1}|C]].
\end{align*}
Using $\prob(C = 0) = 1 - p$ completes the proof.
\end{proof}

Recall Jensen's inequality.
\begin{fact}[Jensen's inequality]
If $X$ is a random variable with finite mean, $\prob(X \in A) = 1$, and 
$g$ is a convex measurable function over $A$, then
\[
\mean[g(X)] \geq g(\mean[X]).
\]
\end{fact}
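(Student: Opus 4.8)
The plan is to prove Jensen's inequality the standard way: exhibit a supporting affine function for $g$ at the point $\mean[X]$ and then integrate the resulting pointwise bound. Since the hypothesis only constrains $X$ through its distribution, which is supported on $A$, it suffices to treat $A$ as a convex subset of $\mathbb{R}$, i.e.\ an interval (possibly unbounded, possibly containing one or both endpoints). Write $\mu_0 = \mean[X]$. Because $A$ is convex and $X$ lies in $A$ with probability one, $\mu_0$ lies in the closure of $A$; I would first dispose of the degenerate situations (for instance $X$ almost surely equal to an endpoint not in $A$, or $g$ having infinite slope at $\mu_0$), in each of which $X = \mu_0$ almost surely and the inequality holds with equality. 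So one may assume $\mu_0$ is a point of $A$ at which $g$ has finite one-sided behaviour.

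The key step is the existence of a \emph{supporting line}: there is a real number $m$ such that
\[
g(x) \geq g(\mu_0) + m(x - \mu_0) \quad \text{for all } x \in A.
\]
When $\mu_0$ is interior to $A$ this follows from the elementary fact that a convex function on an interval has finite one-sided derivatives with $g'_-(\mu_0) \leq g'_+(\mu_0)$, and that the difference quotient $x \mapsto (g(x)-g(\mu_0))/(x-\mu_0)$ is nondecreasing; any $m$ in the interval $[g'_-(\mu_0), g'_+(\mu_0)]$ then works. When $\mu_0$ is an endpoint of $A$ one instead takes $m$ to be the appropriate one-sided derivative, which is finite after the degenerate cases have been removed.

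Given the supporting line, substitute $x = X$ to obtain the pointwise inequality $g(X) \geq g(\mu_0) + m(X - \mu_0)$ holding with probability one. The right-hand side is integrable because $X$ has finite mean, so $\mean[g(X)]$ is well-defined in $(-\infty, +\infty]$, and taking expectations with linearity gives
\[
\mean[g(X)] \geq g(\mu_0) + m(\mean[X] - \mu_0) = g(\mu_0) = g(\mean[X]).
\]
Measurability of $g(X)$ is not an issue: a convex function on an interval is continuous on the interior, hence Borel measurable.

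The main obstacle is entirely the supporting-line claim — verifying that a convex function on an interval admits a subgradient at $\mu_0$ — plus the bookkeeping to handle $\mu_0$ at a boundary point of $A$ or an infinite slope there; all such cases collapse to $X$ being almost surely constant, where the statement is trivial. Everything after the supporting line is one substitution and one application of linearity of expectation.
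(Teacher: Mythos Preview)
Your argument is the standard supporting-line proof of Jensen's inequality and is correct as written; the bookkeeping about endpoints and infinite one-sided derivatives is handled appropriately by reducing those cases to $X$ being almost surely constant.

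There is nothing to compare against, however: the paper does not prove this statement. It is recorded as a \emph{Fact} (alongside the tower property of conditional expectation) and invoked without proof, as is customary for a classical result of this kind. So your proposal supplies a proof where the paper simply cites the result; the supporting-hyperplane argument you give is exactly the textbook one.
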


Now $g(x) = \ind(x \in I_2)((1+\epsilon)/x - (1 - \epsilon)(2\epsilon)^{-1}$
is a convex function over all $x \in I_2 \cup I_3$.  Similarly,
$g(x) = 1$ is a convex function over all $x \in I_1$.  That gives
the following.

\begin{lemma}
\label{LEM:points}
Suppose $Y'' = C \mean[Y_1] + (1 - C)\mean[Y_2]$.  Then
$\mean[Y''] = \mean[Y'] = 1$ and $\sd(Y'') \leq \sd(Y')$.  Also
\begin{equation}
\label{EQN:better}
\prob(Y'' R \leq 1 + \epsilon) \leq \prob(Y' R \leq 1 + \epsilon).
\end{equation}
\end{lemma}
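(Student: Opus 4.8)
The plan is to establish the three assertions separately: the claims about the mean and the standard deviation follow from the standard identities for conditional expectation and conditional variance, while inequality~\eqref{EQN:better} is exactly where the convexity set up just above gets used, via Jensen's inequality together with formula~\eqref{EQN:p}. For the mean, conditioning on $C$ gives $\mean[Y'] = p\,\mean[Y_1] + (1-p)\,\mean[Y_2]$ with $p = \prob(C=1)$, and this same quantity equals $\mean[Y'']$ directly, since $Y''$ is the two-valued variable taking the value $\mean[Y_1]$ with probability $p$ and $\mean[Y_2]$ with probability $1-p$; hence $\mean[Y''] = \mean[Y'] = 1$. For the standard deviation, the key observation is that $\mean[Y' \mid C] = \ind(C=1)\mean[Y_1] + \ind(C=0)\mean[Y_2] = Y''$, so the conditional variance formula reads $\var(Y') = \mean[\var(Y'\mid C)] + \var(\mean[Y'\mid C]) = \mean[\var(Y'\mid C)] + \var(Y'')$; since the first term is nonnegative, $\sd(Y'') \le \sd(Y')$.

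For~\eqref{EQN:better}, I would first check that $Y''$ admits exactly the mixture decomposition of Lemma~\ref{LEM:mixture}. Because $\prob(Y_1 < 1) = 1$ forces $\mean[Y_1] < 1$, so $\mean[Y_1] \in I_1$, and $\prob(Y_2 \ge 1) = 1$ forces $\mean[Y_2] \ge 1$, so $\mean[Y_2] \in I_2 \cup I_3$, the representation $Y'' = C\,\mean[Y_1] + (1-C)\,\mean[Y_2]$ is precisely the decomposition of $Y''$ produced by Lemma~\ref{LEM:mixture}, with the two constituent variables being the point masses at $\mean[Y_1]$ and $\mean[Y_2]$ and with $\prob(Y'' \in I_1) = p$. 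Writing $g(x) = \ind(x\in I_2)[(1+\epsilon)/x - (1-\epsilon)](2\epsilon)^{-1}$ for the function already identified as convex on $I_2 \cup I_3$, equation~\eqref{EQN:p} applied to $Y''$ gives $\prob(RY'' \le 1+\epsilon) = p + (1-p)\,g(\mean[Y_2])$, while the same equation applied to $Y'$ gives $\prob(RY' \le 1+\epsilon) = p + (1-p)\,\mean[g(Y_2)]$. Jensen's inequality, applied to the convex $g$ and to $Y_2$, which lies in $I_2 \cup I_3$ almost surely, yields $\mean[g(Y_2)] \ge g(\mean[Y_2])$, and~\eqref{EQN:better} is then immediate.

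The one step deserving care is the legitimacy of re-applying~\eqref{EQN:p} to $Y''$: one must confirm that replacing $Y_1$ and $Y_2$ by their means keeps the first piece inside $I_1$ and the second inside $I_2 \cup I_3$, so that Lemma~\ref{LEM:mixture} (and hence~\eqref{EQN:p}) applies verbatim to $Y''$ and $g$ may legitimately be evaluated at $\mean[Y_2]$. The boundary or degenerate cases $p \in \{0,1\}$ (equivalently $Y' \in I_1$ almost surely, or $Y' \in I_2 \cup I_3$ almost surely) should be noted but are trivial, since then $\prob(RY' \le 1+\epsilon)$ and $\prob(RY'' \le 1+\epsilon)$ are both equal to $1$ or both given by a single evaluation of $g$. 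Everything else is bookkeeping.
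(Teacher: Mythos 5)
Your proof is correct and follows essentially the same route as the paper: the tower property for the mean, the law of total variance (phrased in the paper as ``replacing a component of a mixture by its expectation cannot increase the standard deviation'') for the SD claim, and Jensen's inequality applied to~\eqref{EQN:p} for~\eqref{EQN:better}. You are somewhat more careful than the paper in verifying that $\mean[Y_1]\in I_1$ and $\mean[Y_2]\in I_2\cup I_3$ so that~\eqref{EQN:p} genuinely applies to $Y''$, and in flagging the degenerate $p\in\{0,1\}$ cases, but these are details the paper leaves implicit rather than a different method.
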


\begin{proof}
Equation~\eqref{EQN:better} follows immediately from 
Jensen's inequality applied to~\eqref{EQN:p}.  
The statement $\mean[Y''] = \mean[Y']$ follows
from $\mean[Y'] = \mean[\mean[Y'|C]].$ 
Replacing a component of a mixture by its expectation gives a 
standard deviation at most that of the original random variable, hence
$\sd(Y'') \leq \sd(Y').$
\end{proof}

So there exist $a_1$ and $a_2$ such that 
$Y'' \in \{1 + a_1,1 + a_2\}$, where $a_1 \leq 0$ and $a_2 > 0$ (since
$\mean[Y''] = 1$).  Note that 
$\sd(Y'') = \prob(Y'' = a_1)a_1^2 + \prob(Y'' = a_2)a_2^2$.

\begin{lemma}
\label{LEM:points2}
Let ${\cal Y}$ be the set of random variables with mean $1$ and 
standard deviation at most $\alpha$.  Let
${\cal W}$ be the set of random variables $W$ with mean $1$, standard
deviation equal to $\alpha$, and 
$W$ takes on one of two values with
probability 1.  Then
\[
\min_{Y \in {\cal Y}} \prob(YR \leq 1 + \epsilon) \geq
 \min_{W \in {\cal W}} \prob(WR \leq 1 + \epsilon).
\]
\end{lemma}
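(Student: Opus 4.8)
The plan is to reduce an arbitrary $Y \in {\cal Y}$ to a two-point random variable of the type in ${\cal W}$ in two moves, each of which only decreases $\prob(YR \leq 1 + \epsilon)$. The first move is exactly Lemmas~\ref{LEM:mixture} and~\ref{LEM:points}: given $Y$, pass to $Y' \sim Y$, split it across $I_1$ and $I_2 \cup I_3$, and replace each of the two mixture components by its conditional mean to obtain $Y''$, which is a two-point random variable $Y'' \in \{1 + a_1, 1 + a_2\}$ with $a_1 \leq 0 < a_2$, mean $1$, $\sd(Y'') \leq \sd(Y') \leq \alpha$, and $\prob(Y'' R \leq 1 + \epsilon) \leq \prob(Y' R \leq 1 + \epsilon) = \prob(YR \leq 1 + \epsilon)$. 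So it suffices to show that among two-point random variables with mean $1$ and standard deviation \emph{at most} $\alpha$, the minimum of $\prob(WR \leq 1 + \epsilon)$ is attained (or at least matched from below) by one whose standard deviation equals $\alpha$ exactly — because such a $W$ lies in ${\cal W}$ and hence bounds the right-hand minimum from above.

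The second move, then, is a monotonicity-in-the-variance statement for two-point variables. Write $Y'' = 1 + a_1$ with probability $w$ and $1 + a_2$ with probability $1 - w$, where the mean constraint forces $w a_1 + (1-w) a_2 = 0$, so $a_1 = -(1-w)a_2/w$ and the variance is $a_2^2 (1-w)/w$. Using the explicit formula~\eqref{EQN:p} specialized to this two-point law, $\prob(Y'' R \leq 1 + \epsilon) = w \cdot \ind(1 + a_1 \in I_1) + (1-w)\, g(1 + a_2)$ with $g$ the convex function from the discussion before Lemma~\ref{LEM:points} (and $1 + a_1 \in I_1$ automatically since $a_1 \leq 0$, unless $a_1 = 0$, a degenerate case handled separately). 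Thus $\prob(Y'' R \leq 1 + \epsilon) = w + (1-w) g(1 + a_2)$. The claim is that this quantity is non-increasing as we increase the spread $a_2$ while keeping the mean at $1$ — equivalently, that inflating $Y''$ toward standard deviation exactly $\alpha$ only makes the tail event more likely, i.e. the probability smaller. Concretely I would fix the ``shape'' by a one-parameter family: scale $a_2 \mapsto \lambda a_2$, $a_1 \mapsto \lambda a_1$ for $\lambda \geq 1$ (which scales $\sd$ by $\lambda$ and preserves the mean), and show $\frac{d}{d\lambda}\big[w + (1-w) g(1 + \lambda a_2)\big] \leq 0$ on $\lambda \geq 1$. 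Since $g$ is non-increasing on $I_2 \cup I_3$ (it decreases from its value at the left endpoint of $I_2$ down to $0$ on $I_3$) and $a_2 > 0$, the derivative $(1-w) a_2 g'(1 + \lambda a_2) \leq 0$ immediately; the weight $w$ is held fixed along this scaling, so the whole expression is non-increasing in $\lambda$. Taking $\lambda = \alpha/\sd(Y'') \geq 1$ produces a $W \in {\cal W}$ with $\prob(WR \leq 1 + \epsilon) \leq \prob(Y'' R \leq 1 + \epsilon) \leq \prob(YR \leq 1 + \epsilon)$, and taking the minimum over $Y \in {\cal Y}$ gives the lemma.

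The main obstacle is bookkeeping at the boundary cases rather than anything deep: one must check the degenerate situations where $\prob(Y \in I_1)$ or $\prob(Y \in I_2 \cup I_3)$ equals $0$ or $1$ (so the mixture is trivial and $Y''$ may be a point mass, forcing $\alpha = 0$ or landing entirely in $I_1$, where $\prob(YR \leq 1 + \epsilon) = 1$ and there is nothing to prove), the case $a_1 = 0$ (point mass at $1$ plus an upper atom, again only possible when the variance is below $\alpha$, and the inflation argument still applies), and the possibility that scaling by $\lambda$ pushes $1 + \lambda a_2$ from $I_2$ into $I_3$ — but since $g$ is continuous and non-increasing across that junction (it is $0$ throughout $I_3$), the monotonicity survives. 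One also needs $\alpha$ small enough that $1 + \lambda a_2$ cannot be forced out of the domain of $g$; but $g$ is defined on all of $I_2 \cup I_3 = [1,\infty)$, so no such issue arises. I do not expect to need $\epsilon \leq 1/3$ for this particular lemma — that hypothesis will be consumed later, in Lemma~\ref{LEM:f2}, when the two-point minimum is actually evaluated.
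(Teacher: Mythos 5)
Your proposal is correct and follows essentially the same route as the paper: reduce to a two-point law via Lemmas~\ref{LEM:mixture} and~\ref{LEM:points}, observe that $\prob(Y''R \leq 1+\epsilon)$ depends only on the upper atom $1+a_2$ and is non-increasing in $a_2$, then scale $(a_1,a_2) \mapsto (\lambda a_1, \lambda a_2)$ with $\lambda = \alpha/\sd(Y'') \geq 1$ and fixed weights to inflate the standard deviation to exactly $\alpha$. The paper phrases the scaling as setting $k_i = \alpha a_i / \sd(Y'')$ directly and states the monotonicity without a derivative computation, but the underlying argument is identical; your extra attention to the degenerate and boundary cases is a small bonus that the paper glosses over.
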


\begin{proof}
Let ${\cal Y}''$ be the set of mean 1, standard deviation 
at most $\alpha$ random variables that only take on one of two values 
with probability 1.  Then the previous lemma got us to
\[
\min_{Y \in {\cal Y}} \prob(YR \leq 1 + \epsilon) \geq
 \min_{Y'' \in {\cal Y}''} \prob(Y''R \leq 1 + \epsilon).
\]

Suppose $Y'' \in {\cal Y}''$ with 
$p_1 = \prob(Y'' = 1 + a_1)$ and $p_2 = 1 - p_1 = \prob(Y'' = 1 + a_2)$, where
$a_1 \leq 0$ and $a_2 > 0$ and $p_1,p_2 \geq 0$.  With this notation,
$\sd(Y'') = p_1 a_1^2 + p_2 a_2^2$, and 
\[
\prob(Y''R \leq 1 + \epsilon) = p_1 + 
 p_2(\ind(a_2 \in [0,2\epsilon/(1-\epsilon)])\left(\frac{((1+\epsilon)/a_2) - 
 (1 - \epsilon)}{2\epsilon}\right).
\]
This is a decreasing function of $a_2$ and independent of $a_1$.

So construct $W$ by setting $k_1 = \alpha a_1/\sd(Y'')$.
$k_2 = \alpha a_2/\sd(Y'').$  Since $\alpha \geq \sd(Y'')$, 
$k_2 \geq a_2$, which means 
$\prob(W R \leq 1 + \epsilon) \leq \prob(Y'' R \leq 1 + \epsilon)$.
Note $\mean[W] = 1$ and $\sd(W) = \epsilon$, proving the 
result.
\end{proof}

What has been accomplished so far it to show that 
$\prob(Y R \leq 1 + \epsilon)$ is at least the optimal objective function
value for the optimization
problem:
\begin{align*}
\min{} &p_1 + p_2(\ind(k_2 \in [0,2\epsilon/(1-\epsilon)])
 ((1+\epsilon/k_2) - (1 - \epsilon))(2\epsilon)^{-1} \\
\text{subject to } & p_1 + p_2 = 1 \\
                   & p_1 k_1 + p_2 k_2 = 0 \\
                   & p_1 k_1^2 + p_2 k_2^2 = \alpha
\end{align*}
Using the constraints to solve for $p_1$ and $p_2$ in terms of $k_2$ turns
the objective function into $\min_{k_2 > 0} h(k_2)$, where
\begin{equation}
\label{EQN:h}
h(k_2)= \frac{k_2^2}{k_2^2 + \alpha^2} + 
  \frac{\alpha^2}{k_2^2 + \alpha^2}
 \ind\left(k_2 \leq \frac{2\epsilon}{1-\epsilon}\right)
 \frac{(1+\epsilon)/(1+k_2) - (1 - \epsilon)}{2\epsilon}.
\end{equation}

\begin{lemma}
\label{LEM:f1}
For $\alpha = \epsilon \sqrt{
   [(1-\epsilon)^2(1 + \epsilon - \epsilon^2)]/[1 + \epsilon]}$
the minimum of $h(k_2)$ for $k_2 > 0$ is at least 3/4.
\end{lemma}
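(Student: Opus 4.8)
The plan is to split the range of $k_2$ at the point $k_\star = 2\epsilon/(1-\epsilon)$ where the indicator in~\eqref{EQN:h} switches off, handle the two pieces separately, and reduce the harder piece to a single polynomial inequality in $\epsilon$ that is visibly true on $(0,1/3)$.

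On $(0,k_\star]$ the indicator equals $1$. A short computation simplifies the indicator factor,
\[
1 - \frac{(1+\epsilon)/(1+k_2) - (1-\epsilon)}{2\epsilon} = \frac{(1+\epsilon)\,k_2}{2\epsilon(1+k_2)},
\]
so that $h(k_2) = 1 - \frac{\alpha^2}{k_2^2+\alpha^2}\cdot\frac{(1+\epsilon)k_2}{2\epsilon(1+k_2)}$. Clearing denominators, the bound $h(k_2)\ge 3/4$ is equivalent to
\[
(k_2^2+\alpha^2)(1+k_2) \ \ge\ \frac{2(1+\epsilon)\alpha^2}{\epsilon}\,k_2,
\]
and substituting $\alpha^2 = \epsilon^2(1-\epsilon)^2(1+\epsilon-\epsilon^2)/(1+\epsilon)$ collapses the right-hand side to $2\epsilon(1-\epsilon)^2(1+\epsilon-\epsilon^2)\,k_2$.

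Now I would bound the left-hand side cheaply: $1+k_2\ge 1$ and $k_2^2+\alpha^2\ge 2\alpha k_2$ by AM--GM, so it suffices to show $\alpha\ge\epsilon(1-\epsilon)^2(1+\epsilon-\epsilon^2)$. Writing $\alpha=\epsilon(1-\epsilon)\sqrt{(1+\epsilon-\epsilon^2)/(1+\epsilon)}$, cancelling $\epsilon(1-\epsilon)$, and squaring reduces this to $(1+\epsilon)(1-\epsilon)^2(1+\epsilon-\epsilon^2)\le 1$; expanding the left side as $1 - \epsilon^2\bigl(3 - \epsilon - 2\epsilon^2 + \epsilon^3\bigr)$ makes it obvious on $(0,1/3)$, since the bracketed cubic stays positive there. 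Finally, for $k_2>k_\star$ the indicator is $0$ and $h(k_2)=k_2^2/(k_2^2+\alpha^2)$ is increasing in $k_2$; since $h$ is continuous at $k_\star$ (the simplified indicator factor equals $1$ there, so both formulas for $h$ agree), the infimum of $h$ over all $k_2>0$ is attained on $(0,k_\star]$ and has already been shown to be at least $3/4$.

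I expect the only real care to be needed in the algebraic reduction of the second paragraph — keeping the direction of the inequality straight through the denominator clearing and the squaring step — and in spotting the AM--GM shortcut, which is what avoids an explicit calculus minimization of $h$ (the stationarity condition $\alpha^2 = k_2^2(1+2k_2)$ being an unpleasant cubic with no clean root).
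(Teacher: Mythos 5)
Your proof is correct, and it takes a genuinely different and more elementary route than the paper. The paper rearranges $h(k_2) \ge 3/4$ into $\alpha^2 \le f_1(k_2)$ for an explicit rational function $f_1$, then does calculus: it differentiates $f_1$, locates the critical point $k_2^*(\epsilon)$ as a root of a quadratic, brackets it by $\epsilon - \epsilon^2 \le k_2^* \le \epsilon$, and feeds those brackets (lower bound into the increasing numerator, upper bound into the increasing denominator) back into $f_1$ to get $f_1(k_2) \ge \epsilon^2/f(\epsilon)$. You instead substitute the numerical value of $\alpha^2$ immediately, apply AM--GM to $k_2^2+\alpha^2$ together with the crude bound $1+k_2\ge 1$, and thereby collapse the two-variable inequality into the single polynomial inequality $(1+\epsilon)(1-\epsilon)^2(1+\epsilon-\epsilon^2)\le 1$ in $\epsilon$ alone, which is immediate from the expansion $1-\epsilon^2(3-\epsilon-2\epsilon^2+\epsilon^3)$ (and, as a bonus, holds on all of $(0,1)$, not just $(0,1/3)$). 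I checked the algebra: the simplification $1-g = (1+\epsilon)k_2/\bigl(2\epsilon(1+k_2)\bigr)$ is right, the denominator clearing and squaring both preserve the inequality direction since every factor involved is positive, and the continuity at $k_\star = 2\epsilon/(1-\epsilon)$ is correct because $(1+\epsilon)/(1+k_\star) = 1-\epsilon$ makes the conditional term vanish there. The trade-off is expository: your argument is shorter and avoids calculus, but it only \emph{verifies} that the stated $f(\epsilon)$ works, whereas the paper's derivation via the exact minimizer $k_2^*$ is what explains \emph{why} $f(\epsilon)$ has the particular form it does (the paper even remarks ``This is why $f(\epsilon)$ was chosen to be what it is''). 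If one wanted to sharpen the constant, the paper's structure is the one to push on; if one just wants a clean proof of the stated lemma, yours is preferable.
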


\begin{proof}
When $k_2 \geq 2\epsilon/(1 - \epsilon)$, $h(k_2) = k_2^2/(k_2^2 + \alpha^2)$,
which is an increasing function of $k_2$.  Therefore the minimum occurs
at $k_2 = 2\epsilon/(1 - \epsilon)$ so it is only necessary to 
consider $k_2 \in (0,2\epsilon/(1 - \epsilon)]$.

Assuming this to be true and simplifying $h(k_2)$ gives
\begin{equation}
h(k_2) = 1 + \frac{\alpha^2}{k_2^2 + \alpha^2}
 \left[
 \left(\frac{1 + \epsilon}{1 + k_2} - (1 - \epsilon)\right)(2\epsilon)^{-1}
 - 1 \right].
\end{equation}

The right hand side is at least 3/4 if and only if
\begin{equation}
\alpha^2 \leq f_1(k_2) \text{ where } 
 f_1(k_2) = \frac{k_2^2(k_2 + 1)\epsilon}{k_2(\epsilon + 2) - \epsilon}. 
\end{equation}

Now
\begin{equation*}
\frac{df_1(k_2)}{dk_2} = 
  \frac{2 k_2 \epsilon(k_2^2(\epsilon + 2) + k_2(1 - \epsilon) - \epsilon)}
 {[k_2(\epsilon+2)-\epsilon]^2}.
\end{equation*}
The denominator and the $2k_2 \epsilon$ factor in the numerator is
always positive, leaving only the quadratic factor 
$k_2^2(\epsilon+2) + k_2(1 - \epsilon) - \epsilon$ to determine the 
sign.  This factor starts
negative at $k_2 = 0$, and is then increasing, so the minimum of 
$f_1(k_2)$ occurs when the quadratic is zero, which happens
at 
$k^*_2(\epsilon) = [\epsilon - 1 + \sqrt{5\epsilon^2 + 6\epsilon + 1}]/(2(\epsilon + 2)).$  It is easy to bound this expression for $\epsilon \in (0,1)$:
\begin{equation}
\epsilon - \epsilon^2 \leq k^*_2(\epsilon) \leq \epsilon.
\end{equation}

Using the lower bound on $k_2^*$ for the numerator and the upper bound
on $k_2^*$ for the denominator of $f_1(k_2)$ gives
\begin{equation}
f_1(k_2) \geq \frac{(\epsilon - \epsilon^2)^2(1 + \epsilon - \epsilon^2)\epsilon}
  {\epsilon(\epsilon + 2) - \epsilon}
 = \epsilon^2
   \frac{(1-\epsilon)^2(1 + \epsilon - \epsilon^2)}{1 + \epsilon}
 = \epsilon^2 / f(\epsilon).
\end{equation}
This is why $f(\epsilon)$ was chosen to be what it is.
Therefore, for $\alpha^2 \leq \epsilon^2/f(\epsilon)$, $\alpha^2 \leq f_1(k_2)$,
and 
$h(k_2) \geq 3/4$.
\end{proof}

This lemma shows that $\prob(SR \leq (1 + \epsilon)\mu) \leq 3/4$
for all $S$ with mean $\mu$ and standard deviation at most 
$\epsilon \mu / f(\epsilon)$.
This same sequence of steps, where we first show that we need
only consider random variables that take on two values, and then 
use the constraints on the variable to reduce it to a one dimensional
optimization problem, and then finally obtain a bound on the standard
deviation.  

The result is a function similar to $h$ of~\eqref{EQN:h}.  Define
\begin{equation}
h_2(k_1) = \frac{k_1^2}{k_1^2 + \alpha^2}
 + \frac{\alpha^2}{k_1^2 + \alpha^2}
   \ind\left(k_1 \geq \frac{-2\epsilon}{1 - \epsilon}\right)
 \frac{1 + \epsilon - (1 - \epsilon)/(1 + k_1)}{2\epsilon}.
\end{equation}
Then $\prob(SR \geq (1 - \epsilon)\mu) \geq \min_{k_1 \leq 0} h_2(k_1)$.

\begin{lemma}
\label{LEM:f2}
For $\alpha = \epsilon\sqrt{[(1+\epsilon)^3(1-2\epsilon)]
      / [1-3\epsilon+2\epsilon^2]}$, 
the minimum of $h_2(k_1)$ for $k_1 \leq 0$ is at least $3/4$.
\end{lemma}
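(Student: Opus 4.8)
The plan is to run the same machine used for Lemma~\ref{LEM:f1}, now applied to the one–variable function $h_2$ on $k_1\le 0$, since the probabilistic reduction has already delivered the target $\min_{k_1\le 0}h_2(k_1)\ge 3/4$. I would first observe that $h_2$ is non-increasing in $\alpha^2$, so pinning $\alpha$ at the value in the statement loses nothing. On the range of $k_1$ where the indicator in $h_2$ is absent — equivalently where the bracketed fraction would leave $[0,1]$ — one has $h_2(k_1)=k_1^2/(k_1^2+\alpha^2)$, which is monotone in $|k_1|$ and tends to $1$ as $k_1\to-\infty$; hence its infimum there is attained at the inner boundary of that range, and it suffices to minimize $h_2$ over the bounded ``middle'' interval $k_1\in[-2\epsilon/(1+\epsilon),0]$, exactly the analogue of restricting to $k_2\in(0,2\epsilon/(1-\epsilon)]$ in Lemma~\ref{LEM:f1}.

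On that middle interval I would simplify to $h_2(k_1)=1+\frac{\alpha^2}{k_1^2+\alpha^2}\left[\left(1+\epsilon-\frac{1-\epsilon}{1+k_1}\right)(2\epsilon)^{-1}-1\right]$ and rewrite ``$h_2(k_1)\ge 3/4$'' as ``$\alpha^2\le f_2(k_1)$'' for a rational function $f_2$ playing the role of $f_1$; on the sub-range where the bracket is already large enough the inequality is automatic, so only a sub-interval abutting $k_1=-2\epsilon/(1+\epsilon)$ survives. Then differentiate: just as in Lemma~\ref{LEM:f1}, the sign of $f_2'$ is governed by a quadratic in $k_1$, so the task reduces to locating its relevant root $k_1^\ast(\epsilon)$ and then showing $\alpha^2\le f_2(k_1^\ast(\epsilon))$ by sandwiching $k_1^\ast(\epsilon)$ between simple bounds (the counterpart of $\epsilon-\epsilon^2\le k_2^\ast(\epsilon)\le\epsilon$) and substituting the lower bound into the numerator and the upper bound into the denominator of $f_2$.

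The one feature absent from Lemma~\ref{LEM:f1} — and the step I expect to be the real obstacle — is that the quadratic controlling $f_2'$ has discriminant $(1-\epsilon)(1-5\epsilon)$, which changes sign at $\epsilon=1/5$. So the argument must split: for $\epsilon\in(0,1/5)$ the minimum of $f_2$ sits at an interior critical point $k_1^\ast(\epsilon)$, treated by the sandwiching above; for $\epsilon\in[1/5,1/3)$ the quadratic has no root in range, $f_2$ is monotone on the interval, and its minimum is the endpoint value $f_2(-2\epsilon/(1+\epsilon))=4\epsilon^2/(3(1+\epsilon)^2)$. I would verify $\alpha^2\le f_2$ on each regime separately, check the two estimates agree at $\epsilon=1/5$, and confirm a single $\alpha$ works uniformly on $(0,1/3)$; managing these two polynomial inequalities is the fiddly part. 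Then $\prob(SR\ge(1-\epsilon)\mu)\ge\min_{k_1\le0}h_2(k_1)\ge 3/4$, which together with the upper-tail bound from Lemma~\ref{LEM:f1} completes the proof of Lemma~\ref{LEM:failure}.
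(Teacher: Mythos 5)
Your plan mirrors the proof of Lemma~\ref{LEM:f1}: reduce to the bounded interval where the indicator is active, rewrite $h_2(k_1)\ge 3/4$ as $\alpha^2\le f_2(k_1)$ with $f_2(k_1)=\epsilon k_1^2(1+k_1)/[(\epsilon-2)k_1-\epsilon]$, locate the critical point via a quadratic, and sandwich. You also correctly spotted the typo in the paper's $h_2$ (the indicator cutoff should be $-2\epsilon/(1+\epsilon)$, not $-2\epsilon/(1-\epsilon)$), and the discriminant $(1-\epsilon)(1-5\epsilon)$ and endpoint value $f_2(-2\epsilon/(1+\epsilon))=4\epsilon^2/(3(1+\epsilon)^2)$ are right. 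The regime split at $\epsilon=1/5$ is a real observation that the paper's one-line ``similar to Lemma~\ref{LEM:f1}'' suppresses entirely.

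However, the step you defer — ``verify $\alpha^2\le f_2$ on each regime'' — cannot be completed for the $\alpha$ in the statement, because that value is simply too large. Note $1-3\epsilon+2\epsilon^2=(1-\epsilon)(1-2\epsilon)$, so the stated $\alpha^2$ simplifies to $\epsilon^2(1+\epsilon)^3/(1-\epsilon)$, which already \emph{exceeds} $\epsilon^2$. Plugging in $\epsilon=1/3$, $k_1=-2\epsilon/(1+\epsilon)=-1/2$ gives $\alpha^2=32/81$ while $k_1^2=1/4$, and the probability term vanishes at this $k_1$, so $h_2(-1/2)=\frac{1/4}{1/4+32/81}=81/209\approx 0.39<3/4$; even at $\epsilon=0.1$, $\alpha^2\approx 0.0148$ while $\min f_2\approx 0.0099$. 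So the inequality $\alpha^2\le f_2(k_1)$ fails on the entire endpoint neighborhood, and the lemma as stated is false (most likely an inverted fraction or other typo in the displayed $\alpha$). Your proof strategy is sound — and, used with a corrected $\alpha$ bounded by $\min f_2$, it does show that the bound $\alpha^2=\epsilon^2/f(\epsilon)$ from Lemma~\ref{LEM:f1} suffices for the lower tail as well, which is what Lemma~\ref{LEM:failure} actually needs — but a complete write-up has to either fix the stated $\alpha$ or explicitly note that $\min f_2 \ge \epsilon^2/f(\epsilon)$ and bypass the stated constant. As written, the proposal would stall at the very verification it postpones.
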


The proof is similar to that of Lemma~\ref{LEM:f1}.

For $\epsilon \leq 1/3$, 
the bound on $\alpha$ from Lemma~\ref{LEM:f1} is stronger than the
bound from Lemma~\ref{LEM:f2}.  This proves Lemma~\ref{LEM:failure}.

\section{Summary}

For any random variable with $\mean[S] = \mu$ and 
$\sd(S) \leq \epsilon/f(\epsilon)$, simply generating independently $R$
uniformly over $[1-\epsilon,1+\epsilon]$ gives $SR$ with the 
properties that 
$\mean[SR] = \mu$, $\prob(SR - \mu \geq \epsilon \mu) \leq 1/4$, and
$\prob(SR - \mu \leq \epsilon \mu) \leq 1/4$.  
Since generating $S$ is usually very costly (as 
in~\cite{hubertoappearc,huber2010d,lovaszv2006,jerrumsv2004,jerrums1993,huber2009c}), generating $R$
incurs very little overhead and then allows the median of 
independent draws to quickly concentrate the resulting approximation.

\bibliographystyle{plain}
\bibliography{../../../2014/2014_refs}

\begin{thebibliography}{10}

\bibitem{huber2010d}
J.~Banks, S.~Garrabrant, M.~Huber, and A.~Perizzolo.
\newblock Using {TPA} for approximating the number of linear extensions.
\newblock arXiv:1010.4981. Submitted, 2010.

\bibitem{dagumklr2000}
P.~Dagum, R.~Karp, M.~Luby, and S.~Ross.
\newblock An optimal algorithm for {M}onte {C}arlo estimation.
\newblock {\em Siam. J. Comput.}, 29(5):1484--1496, 2000.

\bibitem{durrett2010}
R.~Durrett.
\newblock {\em Probability: Theory and Examples, 4th edition}.
\newblock Cambridge University Press, 2010.

\bibitem{dyerf1991}
Martin~E. Dyer and Alan~M. Frieze.
\newblock Computing the volume of a convex body: A case where randomness
  provably helps.
\newblock In B{\'e}la Bollob{\'a}s, editor, {\em Proceedings of AMS Symposium
  on Probabilistic Combinatorics and Its Applications}, volume~44 of {\em
  Proceedings of Symposia in Applied Mathematics}, pages 123--170. American
  Mathematical Society, 1991.

\bibitem{hubertoappearc}
M.~Huber.
\newblock Approximation algorithms for the normalizing constant of {G}ibbs
  distributions.
\newblock {\em Ann. Appl. Probab.}
\newblock {a}rXiv:1206.2689. To appear.

\bibitem{huber2009c}
M.~L. Huber and R.~L. Wolpert.
\newblock Likelihood-based inference for {M}at\'{e}rn type-{III} repulsive
  point processes.
\newblock {\em Adv. Appl. Prob.}, 41(4):958--977, 2009.

\bibitem{jerrums1993}
M.~Jerrum and A.~Sinclair.
\newblock Polynomial-time approximation algorithms for the {I}sing model.
\newblock {\em SIAM J. Comput.}, 22:1087--1116, 1993.

\bibitem{jerrumsv2004}
M.R. Jerrum, A.~Sinclair, and E.~Vigoda.
\newblock A polynomial-time approximation algorithm for the permanent of a
  matrix with nonnegative entries.
\newblock {\em J. of the ACM}, 51(4):671--697, 2004.

\bibitem{koshy2008}
T.~Koshy.
\newblock {\em Catalan Numbers with applications}.
\newblock Oxford University Press, 2008.

\bibitem{lovaszv2006}
L.~Lov\'asz and S.~Vempala.
\newblock Simulated annealing in convex bodies and an $o^*(n^4)$ volume
  algorithm.
\newblock {\em J. Comput. Syst. Sci}, 72(2):392--417.

\end{thebibliography}

\ifdefined\exab
\section*{Appendix}

The full version of the paper follows.
\fi

\end{document}